\newtheorem{theorem}{Theorem}
\newtheorem{definition}{Definition}
\newtheorem{remark}{Remark}
\newcommand{\hreff}[1]{\hyperref[#1]{\ref{#1}}}
\newcommand{\av}[2]{\langle {#1}\rangle_{{}_{#2}}}
\newcommand{\df}{\buildrel\mathrm{def}\over=}
\newcommand{\s}{\mathbf}
\newcommand{\eps}{\varepsilon}
\begin{document}
\title{Bellman function approach to the sharp constants in uniform convexity}

\author{Paata Ivanisvili}
\address{Department of Mathematics, Michigan State University, East Lansing, MI 48823, USA}
\email{ivanisvi@math.msu.edu}
\urladdr{http://math.msu.edu/~ivanisvi}

\begin{abstract}
We illustrate Bellman function technique in finding the modulus of uniform convexity of $L^{p}$ spaces. 
\end{abstract}

\makeatletter
\@namedef{subjclassname@2010}{
  \textup{2010} Mathematics Subject Classification}
\makeatother

\subjclass[2010]{42B20, 42B35, 47A30}

\keywords{Uniform convexity, Bellman function, Concave envelopes}

%\date{}
\maketitle
%\newpage 

\setcounter{equation}{0}
\setcounter{theorem}{0}

\section{Uniform Convexity} 

Let $I$ be an interval of the real line. For an integrable function $f$ over $I$, we set $\langle f \rangle_{I} \df  \frac{1}{|I|}\int_{I} f(s)ds$, and $\|f\|_{p} \df \langle |f|^{p}\rangle_{I}^{1/p} $. 
 We recall the definition of uniform convexity of a normed space $(X,\|\cdot\|)$ (see~\cite{Cl}). 
\begin{definition}
\textup{(}Clarkson~'36\textup{)} $X$ is uniformly convex if $\; \forall \eps >0$, $\exists \delta>0$ s.t. if $\|x\|=\|y\|=1$ and $\|x-y\|\geq \eps$, then $\left\|\frac{x+y}{2}\right\|\leq 1-\delta$.
\end{definition}
{\em Modulus of convexity}  of the normed space $X$ is defined as follows:
\begin{align*}
&\delta_{X}(\eps) = \inf \left\{  \left(1- \frac{\|f+g\|}{2} \right) \; : \; \|f\|=1,\; \|g\|=1,\; \|f-g\| \geq \eps \right\}.
\end{align*}
\begin{remark}
$(X,\|\cdot \|)$ space is {\em uniformly convex} iff $\delta_{X}(\eps) >0$. 
\end{remark}
O.~Hanner (see~\cite{Ha}) gave an elegant proof of finding the constant $\delta_{L^{p}}(\eps)$ in $L^{p}([0,1])$ space for $p\in  (1,\infty)$ in 1955. He proved two necessary inequalities (further called Hanner's inequalities) in order to obtain constant $\delta_{L^{p}}(\eps)$. Namely, 
\begin{align}\label{han}
\|f+g\|_{p}^{p}+\|f-g\|_{p}^{p}\geq (\|f\|_{p}+\|g\|_{p})^{p}+|\|f\|_{p}-\|g\|_{p}|^{p},  \quad   p \in [1,2],
\end{align}
and the inequality (\ref{han}) is reversed if $p\geq 2$. 
 Hanner mentions in his note~\cite{Ha} that his  proof is a reconstruction of some Beurling's ideas  given at a seminar in Upsala in 1945. In \cite{BKL} non-commutative case of Hanner's inequalities was investigated. Namely, Hanner's inequality holds for $p \in [1,3/4]\cup [4,\infty)$, and the case $p \in (3/4,4)$ (where $p\neq 2$) was left open. 
 
  In this note we present ``general'' systematic approach  in finding the constant $\delta_{L^{p}}(\eps)$ by Bellman function technique, where absolutely no background is required, only elementary calculus. We also show that the Bellman function (\ref{bellman}), which arise naturally, is a minimal concave function with the given boundary condition (\ref{bc}).

\section{Minimal concave functions over the obstacle}
Let  $\Omega \subset  \mathbb{R}^{n}, m:\Omega \to \mathbb{R}^{k}$ and $H :\Omega \to \mathbb{R}$. Let $\Omega(I)$ denotes the class of piecewise constant  vector-valued functions 
$\varphi :I \to \Omega $, and let $\mathrm{conv}(\Omega)$ be the convex hull of the set $\Omega$.
We define the Bellman function as follows 
\begin{align*}
B(x) =\sup_{\varphi \in \Omega(I)} \{ \av{H(\varphi)}{I}\,: \quad \av{m(\varphi)}{I}=x\}.
\end{align*}
\begin{theorem}\label{mainth}
The following properties hold:
\begin{itemize}
\item[1.] $B$ is defined on the convex set $\mathrm{conv}[m(\Omega)]$;
\item[2.] $B(m(y))\geq H(y)$ for all $y \in \Omega$;
\item[3.] $B$ is concave function;
\item[4.] $B$ is minimal among those who satisfy properties 1,2 and 3. 
\end{itemize}
\end{theorem}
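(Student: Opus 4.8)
The plan is to verify the four properties in order, since each essentially unpacks a definition, and then spend the real effort on property 4, which is the only substantive claim.

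\textbf{Properties 1 and 2.} For property 1, note that if $\varphi \in \Omega(I)$ takes values $\omega_1,\dots,\omega_N$ on a partition of $I$ with weights $\alpha_j = |I_j|/|I|$, then $\av{m(\varphi)}{I} = \sum_j \alpha_j m(\omega_j)$ is by definition a convex combination of points of $m(\Omega)$, hence lies in $\mathrm{conv}[m(\Omega)]$; conversely, by Carathéodory's theorem every point of $\mathrm{conv}[m(\Omega)]$ is such a finite convex combination, so it is attained as $\av{m(\varphi)}{I}$ for a suitable step function, and the supremum defining $B(x)$ is over a nonempty set. For property 2, given $y \in \Omega$ take the constant function $\varphi \equiv y$; it is admissible for $x = m(y)$ and yields $\av{H(\varphi)}{I} = H(y)$, so $B(m(y)) \geq H(y)$.

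\textbf{Property 3.} Fix $x_1, x_2 \in \mathrm{conv}[m(\Omega)]$ and $\lambda \in (0,1)$. Given $\eps>0$, pick admissible step functions $\varphi_1,\varphi_2$ on $I$ with $\av{m(\varphi_i)}{I} = x_i$ and $\av{H(\varphi_i)}{I} \geq B(x_i) - \eps$. Split $I$ into two subintervals $I', I''$ of lengths $\lambda|I|$ and $(1-\lambda)|I|$, and let $\varphi$ be the rescaled copy of $\varphi_1$ on $I'$ and of $\varphi_2$ on $I''$. Then $\varphi \in \Omega(I)$, $\av{m(\varphi)}{I} = \lambda x_1 + (1-\lambda) x_2$, and $\av{H(\varphi)}{I} = \lambda \av{H(\varphi_1)}{I} + (1-\lambda)\av{H(\varphi_2)}{I} \geq \lambda B(x_1) + (1-\lambda) B(x_2) - \eps$. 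Taking the supremum over $\varphi$ and letting $\eps \to 0$ gives $B(\lambda x_1 + (1-\lambda)x_2) \geq \lambda B(x_1) + (1-\lambda) B(x_2)$, which is concavity.

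\textbf{Property 4.} Let $G$ be any function on $\mathrm{conv}[m(\Omega)]$ that is concave and satisfies $G(m(y)) \geq H(y)$ for all $y \in \Omega$; I must show $G \geq B$ pointwise. Fix $x$ and an admissible $\varphi \in \Omega(I)$ with $\av{m(\varphi)}{I} = x$, taking values $\omega_j$ with weights $\alpha_j$. Applying Jensen's inequality for the concave function $G$ to the convex combination $x = \sum_j \alpha_j m(\omega_j)$ gives $G(x) \geq \sum_j \alpha_j G(m(\omega_j)) \geq \sum_j \alpha_j H(\omega_j) = \av{H(\varphi)}{I}$. Taking the supremum over all such $\varphi$ yields $G(x) \geq B(x)$. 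The main obstacle here is a minor subtlety rather than a deep one: one should be careful that $G$ need only be assumed finite and concave on the convex hull, so that Jensen's inequality applies with no integrability issues because $\varphi$ takes finitely many values; the passage from two-point to finite convex combinations in Jensen is the standard induction and requires concavity on the whole segment, which is exactly the hypothesis on $G$.
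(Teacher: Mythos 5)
Your proposal is correct and follows essentially the same route as the paper: Carath\'eodory's theorem for the domain in property 1, the constant test function for property 2, concatenation of rescaled step functions on subintervals for concavity, and Jensen's inequality applied to the finite convex combination for minimality. The only difference is cosmetic -- you spell out the finite-sum form of Jensen, while the paper states it as $\av{H(\varphi)}{I}\leq \av{G(m(\varphi))}{I}\leq G(\av{m(\varphi)}{I})$ -- so nothing further is needed.
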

\begin{proof}
Fist we show the property 1. Let $\mathrm{Dom}\, B$ denotes the domain where $B$ is defined. Since $m(\varphi) \in \mathrm{conv}[m(\Omega)]$ we have $\av{m(\varphi)}{I} \in \mathrm{conv}[m(\Omega)]$. Therefore $\mathrm{Dom}\, B \subseteq \mathrm{conv}[m(\Omega)]$. Now we show the opposite inclusion. 
Carath\'eodory's theorem implies that for any $x \in \mathrm{conv}[m(\Omega)]$ we have $x = \sum_{j=1}^{n+1}a_{j}x_{j}$, where $a_{j} \geq 0,$ $\sum_{j=1}^{n+1}a_{j}=1$ and $x_{j} \in m(\Omega)$. Let the points $y_{j}$ be such that $m(y_{j})=x_{j}$. We choose $\varphi$ so that $|\{t \in I \, :\; \varphi(t)=y_{j}\}|=a_{j}|I|$. Then $\varphi \in \Omega(I)$. Hence, 
\begin{align*}
\av{m(\varphi)}{I}=\frac{1}{|I|}\int_{I}m(\varphi(t))dt = \sum_{j=1}^{n+1}\frac{1}{|I|}\int_{\{t : \varphi(t)=y_{j}\}} m(\varphi(t))dt = \sum_{j=1}^{n+1}\frac{1}{|I|}x_{j} |I|a_{j}=x
\end{align*}

Now we show the property 2. Let $\varphi_{0}(t)=y, \, t \in I$. Then $\av{m(\varphi_{0})}{I}=m(y)$. Thus 
\begin{align*}
B(m(y))=\sup_{\varphi \in \Omega(I) : \av{m(\varphi)}{I}=m(y)} \av{H(\varphi)}{I}\geq \av{H(\varphi_{0})}{I}=H(y).
\end{align*}

Now we show the property 3. It is enough to show that $B(\theta x + (1-\theta) y) \geq \theta B(x) + (1-\theta)B(y)$ for all $x,  y \in \mathrm{conv}[m(\Omega)]$ and $\theta \in [0,1]$. There exist functions $\varphi, \psi \in \Omega(I)$ such that $\av{m(\varphi)}{I}=x, \av{m(\psi)}{I}=y$ and 
\begin{align*}
\av{H(\varphi)}{I}\geq B(x)-\eps, \quad \av{H(\psi)}{I} > B(y)-\eps.
\end{align*}
We split interval  $I$ by two disjoint subintervals $I_{1}$ and $I_{2}$  so that $|I_{1}|=\theta |I|$. Let $L_{j} : I_{j} \to I$  be a linear bijections. We consider the concatenation as follows 
\begin{align*}
\eta(t) = 
\begin{cases}
\varphi(L_{1}(t)), \quad t \in I_{1},\\
\varphi(L_{2}(t)), \quad t \in I_{2}. 
\end{cases}
\end{align*}
Clearly $\eta(t) \in \Omega(I)$, $\av{m(\eta)}{I}=\theta x + (1-\theta)y$ and 
\begin{align*}
B(\theta x + (1-\theta)y) \geq \av{H(\eta)}{I}=\theta \av{H(\varphi)}{I}+ (1-\theta)\av{H(\psi)}{I} > \theta B(x)+(1-\theta)B(y)-\eps.
\end{align*}

Now we show the property 4. Let $G$ satisfies properties 1,2 and 3. Then Jensen's inequality implies that  for any $\varphi \in \Omega(I)$ we have 
\begin{align*}
\av{H(\varphi)}{I}\leq \av{G(m(\varphi))}{I}\leq G(\av{m(\varphi)}{I})=G(x)
\end{align*}
\end{proof}
We make the following simple observations:
\begin{itemize}
\item[1.] Under some mild assumptions on $H$ and $m$ one can obtain Theorem \ref{mainth} for the measurable class of functions $\varphi :I \to \Omega$. For example, if one defines $\Omega^{*}(I)$ to be the class of measurable functions $\varphi : I \to \Omega$ so that $m(\varphi)$ is integrable, and $H(\varphi)$ is measurable, then we can introduce the Bellman function as follows 
\begin{align*}
B^{*}(x) =\sup_{\varphi \in \Omega^{*}(I)} \{ \av{H(\varphi)}{I}\,: \quad \av{m(\varphi)}{I}=x\}.
\end{align*}
Then it is clear that $B^{*} \geq B$. In order to show the opposite inequality $B^{*}\leq B$ one has to justify Lebesgue's dominated convergence theorem (or Fatou's lemma). To do this it is enough to require that $H$ and $m$ are continuous maps, $B$ is not identically infinity, and $m(\varphi_{n})$ has an integrable majorant whenever $\varphi_{n} \to \varphi$ a.e.  for some piecewise constant functions $\varphi_{n}$.

\item[2.] We note that the property 2  of Theorem \ref{mainth}, namely, $B(m(y))\geq H(y)$ for all $y \in \Omega$, can be rewritten as follows $B(x) \geq \sup_{y \in m^{-1}(x)}H(y)$ for all $y\in m(\Omega)$. We define the obstacle as follows $R(x) \df \sup_{y \in m^{-1}(x)} H(y)$. Then it is clear that 
\begin{align*}
B(x) = \sup_{\varphi \in \Omega(I)}\{ \av{R(m(\varphi))}{I}\, : \quad \av{m(\varphi)}{I}=x\},
\end{align*}
and the property 2 takes the form $B(x) \geq R(x)$ for all $x \in m(\Omega)$. 

\item[3.]The Bellman function $B$ does not depend on the choice of the interval $I$, and $B(x)=R(x)$ at the extreme points of the set $\mathrm{conv}[m(\Omega)]$. 

\item[4.] If $\mathrm{conv}[m(\Omega)]=m(\Omega)$ and the obstacle $R(x)$ is concave then $B(x)=R(x)$ for all $x \in m(\Omega)$.
\end{itemize}
Now we show the application of Theorem~\ref{mainth} on the example of uniform convexity of $L^{p}$ spaces. 
\section{Bellman function in uniform convexity}
\subsection{Domain and the boundary condition}
 The definition of the modulus of uniform convexity tells us to consider the following function
\begin{align}\label{bellman}
B(x) = \sup_{f,g} \{ \av{|\theta f+(1-\theta)g|^{p}}{I}, \; \av{(|f|^{p},|g|^{p},|f-g|^{p})}{I}=x \}.
\end{align}
where $\theta=1/2$. Then it is clear that $\delta_{L^{p}}(\eps) = 1-\sup_{2^{p}\geq x_{3} \geq \eps^{p}} (B(1,1,x_{3}))^{1/p}$.
Theorem~\ref{mainth} (together with the observation 1) implies that if we set $\Omega=\mathbb{R}^{2}$, $\varphi=(f,g)$ $m(x,y)=(|x|^{p},|y|^{p},|x-y|^{p})$ and  $H(x,y)=|\theta x+(1-\theta)y|^{p}$, then $B$ is the minimal concave function on the domain $\mathrm{conv}[m(\Omega)]$ such that $B(m(x,y)) \geq H(x,y)$.

If we set $\av{m(\varphi)}{I}=x=(x_{1},x_{2},x_{3})$ then note that all variables $x_{1},x_{2},x_{3}$ are nonnegative. Note also that
\begin{align*}
&\Lambda=\mathrm{conv}[m(\Omega)] = \{ x_{1},x_{2},x_{3} \geq 0,\\
&x_{1}^{1/p}+x_{2}^{1/p}\geq x_{3}^{1/p},\; x_{2}^{1/p}+x_{3}^{1/p}\geq x_{1}^{1/p},\; x_{3}^{1/p}+x_{1}^{1/p}\geq x_{2}^{1/p}\}.
\end{align*}
$\Lambda$ is the convex cone and $\partial \Lambda=m(\Omega)$.
 Minkowski's inequality implies that whenever $x \in m(\Omega)$  we must have $f=\lambda g$ for an appropriate $\lambda$. This allows us to find boundary data for the function $B$. 
\begin{align} \label{bc}
B(x_{1},x_{2},x_{3}) = 
\begin{cases} 
|\theta x_{1}^{1/p}-(1-\theta)x_{2}^{1/p}|^{p}, &  x_{1}^{1/p}+x_{2}^{1/p}=x_{3}^{1/p},\\
(\theta x_{3}^{1/p}+x_{2}^{1/p})^{p},  & x_{2}^{1/p}+x_{3}^{1/p}=x_{1}^{1/p},\\
(x_{1}^{1/p}+(1-\theta)x_{3}^{1/p})^{p}, & x_{3}^{1/p}+x_{1}^{1/p}=x_{2}^{1/p},
\end{cases}
\end{align}
 where $x_{1},x_{2}$ and $x_{3}$ are nonnegative numbers. 
 Thus, Theorem~\ref{mainth} implies that $B$ is minimal concave function on $\Lambda$ with the given boundary condition (\ref{bc}).  
\subsection{Sharp constants in uniform convexity}
We recall that
\begin{align*}
\delta_{L^{p}}(\eps) = 1-\sup_{2^{p}\geq x_{3} \geq \eps^{p}} (B(1,1,x_{3}))^{1/p}.
\end{align*}
The reader can try to find the function $B(x_{1},x_{2},x_{3})$. However, one can avoid finding the exact value of $B(x_{1},x_{2},x_{3})$, and by Theorem~\ref{mainth} (namely, property 4) one can present an appropriate concave function $\mathcal{B}$ which majorize $B$ on $\partial \Lambda$ (and hence on $\Lambda$), however, gives the exact value of $\delta_{L^{p}}(\eps)$. 

We consider the case $p\geq 2$. Let's consider the following function
\begin{align}\label{bb1}
\mathcal{B}(x_{1},x_{2},x_{3})=\frac{x_{1}+x_{2}}{2}-\frac{x_{3}}{2^{p}}. 
\end{align}
Surely $\mathcal{B}$ is concave in $\Lambda$ and $(\mathcal{B}-B)|_{\partial \Omega} \geq 0$ (see  Appendieces).  Therefore, Theorem~\ref{mainth} implies that $\mathcal{B} \geq B$ in $\Lambda$.  Thus, 
\begin{align*}
&\delta(\eps) = 1-\sup_{2^{p}\geq x_{3} \geq \eps^{p}} (B(1,1,x_{3}))^{1/p}\geq 
1-\sup_{2^{p}\geq x_{3} \geq \eps^{p}} (\mathcal{B}(1,1,x_{3}))^{1/p}=
\\
&1-(\mathcal{B}(1,1,\eps^{p}))^{1/p}=1-\left(1-\frac{\eps^{p}}{2^{p}} \right)^{1/p}.
\end{align*}
If we show that $B(1,1,\eps^{p})= \mathcal{B}(1,1,\eps^{p})$ then this would  imply that the estimate obtained above for $\delta_{L^{p}}(\eps)$ is sharp.  Homogeneity of the functions $\mathcal{B}$ and $B$ (i.e. $B(\lambda \s{x})=\lambda B(x)$ for all $\lambda \geq 0$) implies that it is enough to prove the equality $B(\eps^{-p},\eps^{-p},1)= \mathcal{B}(\eps^{-p},\eps^{-p},1)$. We show that $B(s,s,1)= \mathcal{B}(s,s,1)$ for all $s \geq 2^{-p}$.
Take an arbitrary $s \in (2^{-p}, \infty)$,
 Consider the points $A = (2^{-p},2^{-p},1)$ and $D(s) =(s,|1-s^{1/p}|^{p},1)$. Clearly $A,D(s) \in \partial \Lambda$. Let $L_{s}(\s{x})$ be a linear function such that  $L_{s}(A)=B(A)=0$, $L_{s}(D(s)) = B(D(s))=(-1/2+s^{1/p})^{p}$. Concavity of  $B$ implies that $B \geq L$ on the chord $[A,D(s)]$ joining the points $A$ and $D(s)$. On the other hand one can easily see that for all $a>0$, there exists sufficiently large $s$ such that we have $|L_{s}(\s{x}) - \mathcal{B}(\s{x})|<a$ for the points $\s{x}$ belonging to the chord $[A,D(s)]$. Continuity of $B$ and $\mathcal{B}$ finishes the story. 
 
 Now we consider the case $1<p<2$. We set 
 \begin{align*}
 (g(s),f(s)) = (|1-s^{1/p}|^{p},(s^{1/p}-1/2)^{p}) \quad \text{for} \quad s \geq 2^{-p}.
 \end{align*}
  Let $s^{*} \in [2^{-p},\infty)$ be the solution of the equation $2\eps^{-p}=s^{*}+g(s^{*})$. Consider the following function 
 \begin{align}\label{bb2}
 \mathcal{B}(x_{1},x_{2},x_{3})=x_{3}f(s^{*})+\frac{f'(s^{*})}{1+g'(s^{*})}\left[x_{1}+x_{2}-2\eps^{-p}x_{3} \right]. 
 \end{align}
Surely $\mathcal{B}$ is concave in $\Lambda$ and $(\mathcal{B}-B)|_{\partial \Omega} \geq 0$ (see Appendieces).  Therefore, Theorem~\ref{mainth} implies that $\mathcal{B} \geq B$ in $\Lambda$. The inequality $B'_{x_{3}}(1,1,x_{3})\leq 0$ follows from the inequality $f(s)(1+g'(s))-f'(s)(s+g(s))\leq 0$ for all $s \geq 2^{-p}$,  which can be seen by direct computation.  Thus, 
\begin{align*}
&\delta(\eps) = 1-\sup_{2^{p}\geq x_{3} \geq \eps^{p}} (B(1,1,x_{3}))^{1/p}\geq 
1-\sup_{2^{p}\geq x_{3} \geq \eps^{p}} (\mathcal{B}(1,1,x_{3}))^{1/p}=
\\
&1-(\mathcal{B}(1,1,\eps^{p}))^{1/p}=1-\eps \left(f(s^{*})\right)^{1/p}.
\end{align*}

If we show that $B(1,1,\eps^{p})= \mathcal{B}(1,1,\eps^{p})$ then this would  imply that the estimate obtained above for $\delta_{L^{p}}(\eps)$ is sharp.  Homogeneity of the functions $\mathcal{B}$ and $B$ implies that it is enough to prove the equality $B(\eps^{-p},\eps^{-p},1)= \mathcal{B}(\eps^{-p},\eps^{-p},1)$. 

Consider the points $A = (s^{*},g(s^{*}),1)$ and $D =(g(s^{*}),s^{*},1)$. Clearly $A,D \in \partial \Lambda$. Let $L(\s{x})$ be a linear function such that  $L(A)=B(A)=f(s^{*})$, $L(D) = B(D)=f(s^{*})$. Concavity of  $B$ implies that $B(\s{x}) \geq L(\s{x})$ on the chord $[A,D]$ joining the points $A$ and $D$. On the other hand one can easily see that $\mathcal{B}(\s{x}) = L(\s{x})$ for the points $\s{x}$ belonging to the chord $[A,D]$. Thus the fact $(\eps^{-p},\eps^{-p},1) \in  [A,D]$, which follows from the equality $2\eps^{-p}=s^{*}+g(s^{*})$ finishes the proof.  It is clear that the constant $\delta_{L^{p}}(\eps) \in [0,1]$ is the solution of the equation  
\begin{align*}
\left( 1-\delta(\eps)+\frac{\eps}{2}\right)^{p} + \left|1-\delta(\eps)-\frac{\eps}{2} \right|^{p}=2.
\end{align*}

Thus we finish the current note. One can ask how did we find the functions $\mathcal{B}$. These functions are tangent planes to the graph of the actual Bellman function $B$ at point $(1,1,\eps^{p})$. Unlike the actual Bellman function $B$, which has the implicit expression, the tangent planes $\mathcal{B}$ to the graph $B$ have simple expression, so it was easy to work with tangent planes $\mathcal{B}$, rather than with actual Bellman function $B$.

At the end of the note I would like to mention that the Bellman function (\ref{bellman}) was born during the listening seminar given by N.~K.~Nikolskij at Chebyshev Laboratory in Saint Petersburg. Idea of investigating  such type of Bellman functions comes from A.~Volberg's seminars given at the same laboratory (see also \cite{iOSVZ}). 
I would like to thank P.~Zatitskiy and D.~Stolyarov for long discussion and construction of the actual Bellman function (\ref{bellman}). They should be considered as co-authors of this note. 

\section{Appendieces}
We show that $(\mathcal{B}-B)|_{\partial \Lambda} \geq 0$. Homogeneity of $B$ and $\mathcal{B}$ implies that without loss of generality we can assume $x_{3}=1$. In this case boundary condition (\ref{bc}) can be rewritten as follows $B(s,g(s),1)=B(g(s),s,1)=f(s)$ for all $s\geq 2^{-p}$.  It is enough to show that $U(s) = \mathcal{B}(s,g(s),1)-f(s)  \geq 0$ for $s \geq 2^{-p}$, because the other cases can be covered by symmetry i.e. $\mathcal{B}(s,g(s),1)=\mathcal{B}(g(s),s,1)$ and $B(s,g(s),1)=B(g(s),s,1).$

{\bf Case  $p\geq 2$.}  Let $V(s)=U(s^{p})$. Note that  $V(2^{-1})=0$, and $V'(s) \geq 0$ for $s \geq 1$. Note that $W(s)=V'(s^{-1})p^{-1}2 s^{p-1}=1-(s-1)^{p-1}-2(1-s/2)^{p-1}$ is a concave function for $s \in (1,2)$, and it is nonnegative at the endpoints of this interval, hence $V'(s) \geq 0$ for $s \geq 2^{-p}$. 

{\bf Case $1<p<2$.} Note that $U(s^{*})=0$,  the function $\frac{f'(s)}{1+g'(s)}$ is decreasing and $1+g'(s) > 0$ for $s \geq 2^{-p}$. It is clear that $U'(s) (1+g'(s)) = \frac{f'(s^{*})}{1+g'(s^{*})}-\frac{f'(s)}{1+g'(s)}$. Thus we see that $U'(s) < 0$ for $s \in [2^{-p},s^{*})$, $U'(s^{*})=0$, and $U'(s) >0$ for $s \in (s^{*},\infty)$. Hence $U(s) \geq 0$.

\end{document}